\renewcommand{\PrintDOI}[1]{%
  \href{http://dx.doi.org/#1}{{\tt DOI:#1}}%
%  \IfEmptyBibField{volume}{, (to appear in print)}{}%
}
\renewcommand{\eprint}[1]{#1}
\numberwithin{equation}{section}
\newtheorem{theorem}{Theorem}[section]
\newtheorem{lemma}[theorem]{Lemma}
\newtheorem{proposition}[theorem]{Proposition}
\theoremstyle{remark}
\newtheorem{remark}[theorem]{Remark}
\newtheorem{example}[theorem]{Example}
\theoremstyle{definition}
\newtheorem*{convention}{Convention}
\newcommand{\bC}{\mathbb{C}}
\newcommand{\bR}{\mathbb{R}}
\newcommand{\amn}{\mathbin{\triangledown}}
\newcommand{\gmn}{\mathbin{\#}}
\newcommand{\hmn}{\mathbin{!}}
\newcommand{\smn}{\mathbin{\sigma}}
\newcommand{\hlf}{1/2}%{\sfrac12}
\newcommand{\absv}[1]{\left\lvert#1\right\rvert}
\DeclareMathOperator{\Tr}{Tr}
\newcommand{\sa}{\mathrm{sa}}
\title[Strengthened convexity of operator decreasing functions]{Strengthened convexity of positive operator monotone decreasing functions}
\author{Megumi Kirihata}
\address{Department of Mathematics, Ochanomizu University, Otsuka 2-1-1, Bunkyo-ku, Tokyo  112-8610, Japan}
\email{krmg.94@ezweb.ne.jp}
\author{Makoto Yamashita}
\address{Department of Mathematics, University of Oslo, P.O box 1053, Blindern, 0316 Oslo, Norway}
\email{makotoy@math.uio.no}
\date{v3: May 19, 2021 (post-publication changes); v2: September 12, 2019; v1: March 3, 2019}
\begin{document}

\begin{abstract}
We prove a strengthened form of convexity for operator monotone decreasing positive functions defined on the positive real numbers.
This extends Ando and Hiai's work to allow arbitrary positive maps instead of states (or the identity map), and functional calculus by operator monotone functions defined on the positive real numbers instead of the logarithmic function.
\end{abstract}

\maketitle

\section{Introduction}

The theory of operator monotone and convex functions initiated by L\"{o}wner and Kraus, and modernized by Choi~\cite{MR0355615}, Ando~\citelist{\cite{ando-lect-note-op-ineq}\cite{MR535686}}, and Hansen and Pedersen~\cite{MR649196} in connection to positive linear maps of operators, reveals interesting relations between function theoretic concepts on the one hand, and structure of positive or self-adjoint operators on Hilbert spaces on the other.
Compatibility with the (partial) order relation of such operators forces strong regularity on functions, and in particular the operator monotone decreasing functions $f(x)$ on an interval admit, up to linear terms, integral representations with $1/(\lambda + x)$ as integrand.
These functions are \emph{logarithmically convex} (or superconvex) besides being monotone decreasing, which suggests that operator monotone (decreasing) functions automatically have stronger form of concavity / convexity.

The corresponding notion of operator logarithmic convexity was first considered in~\cite{MR1845701}, and in an interesting paper~\cite{MR2805638}, Ando and Hiai showed that operator monotone decreasing positive functions $f(x)$ defined on positive real numbers indeed admit operator log convexity.
Moreover, they showed that composition of states and such functions have logarithmic convexity, that is, if $\omega$ is a state, the map $X \mapsto \log \omega(f(X))$ is convex for positive invertible operators $X$.
In another direction, Kian and Dragomir \cite{MR3434508} gave a characterization of operator log convexity by a strengthened form of the Jensen inequality.

In this short note we show that the functions in this class have even stronger form of convexity, by allowing $\log(x)$ and $\omega(X)$ above to be of more general forms.
Our main result (Theorem~\ref{thm:main}) states that, if $g(x)$ is an operator monotone function defined for $0 < x < \infty$, and if $\Phi$ is a strictly positive linear map of operators, then the map $X \mapsto g(\Phi(f(X)))$ is convex on invertible positive operators.

This generalization is comparable to Hiai's more recent work~\citelist{\cite{MR3067823}\cite{MR3464069}}, in which he considers the joint concavity / convexity problems for trace functionals of the form $\Tr(g(\Phi(X^p)^{\hlf}\Psi(Y^q)\Phi(X^p)^{\hlf}))$ with suitable $g(x)$ and positive maps $\Phi$ and $\Psi$, generalizing a foundational work of Lieb~\cite{MR0332080}.
See Section~\ref{sec:two-var-ver} for a more detailed comparison.

\medskip
\paragraph{Acknowledgements} We thank Fumio Hiai for encouragement and pointing us to~\citelist{\cite{MR3067823}\cite{MR3464069}}.
%It is our pleasure to thank the anonymous referee who pointed out an improvement of our main result from initial version using a result from
We would also like to thank the anonymous reviewer for drawing our attention to \cite{MR3434508}.

\medskip
\paragraph{Note added after publication}
May 19, 2021: there was an unfortunate typo in the formulation of Theorem \ref{thm:main}, where $B^{++}$ should have been $B_{\sa}$.
The proof is unchanged.
We also take this opportunity to note that the joint convexity of \eqref{eq:2-var-func} cannot be hoped for with $g(x) = - x^s$, $-1\le s < -1/2$ and $f_1(x) = f_2(x) = 1/x$ by the results of \cite{MR3067823}*{Section 5}.

\section{Preliminaries}

Let us fix our convention and review relevant basic facts.
See standard texts such as~\citelist{\cite{MR649196}\cite{MR1477662}} for the details.

\begin{convention}
In the following $A$ always denotes a unital C$^*$-algebra, such as $M_n(\bC)$ or $B(H)$ for some Hilbert space $H$.
\end{convention}

\subsection{Positive operators}

We denote the set of invertible positive elements of $A$ by $A^{++}$, and the set of selfadjoint elements by $A_{\sa}$.
When $B$ is another unital C$^*$-algebra, a linear map $\Phi\colon A \to B$ is said to be \emph{strictly positive} if it maps $A^{++}$ into $B^{++}$.

\subsection{Operator means}

When $X$ and $Y$ are elements of $A^{++}$, their \emph{harmonic mean} is
$$
X \hmn Y = \left(\frac{X^{-1} + Y^{-1}}{2}\right)^{-1}.
$$
The usual average
$$
X \amn Y = \frac{X + Y}{2}
$$
is called the \emph{arithmetic mean}, so that we can write $X \hmn Y = (X^{-1} \amn Y^{-1})^{-1}$.

\subsection{Operator monotone and convex functions}

Let $I$ be a subset of $\bR$.
A real function $f(x)$ on $I$ is said to be \emph{operator monotone} on $I$ if the functional calculus by $f$ satisfies
$$
X \le Y \Rightarrow f(X) \le f(Y)
$$
for all self-adjoint elements $X$, $Y$ in $A$ whose spectra $\sigma(X)$, $\sigma(Y)$ are contained in $I$.
If $X \le Y$ implies $f(X) \ge f(Y)$, we say that $f$ is \emph{operator monotone decreasing}.

Similarly, assuming $I$ to be an interval, $f$ is said to be \emph{operator convex} on $I$ if
$$
f(t X + (1-t) Y) \le t f(X) + (1-t) f(Y)
$$
holds for $X$, $Y$ as above and $0 \le t \le 1$.
\emph{Operator concavity} is defined by the reverse inequality.

If $f\colon (0, \infty) \to \bR$ is operator monotone, then it is operator concave. %, and the reverse implication holds if the range of $f$ is bounded from below.
Similarly, continuous operator monotone decreasing functions on $(0, \infty)$ are operator convex.

Let $\Phi\colon A \to B$ be a unital positive map between unital C$^*$-algebras, and $I = [a, b]$ be a closed interval.
If $X \in A$ is a self-adjoint element with $\sigma(X) \subset I$, the spectrum of $\Phi(X)$ is also in $I$.
Moreover, if $f(x)$ is an operator convex function on $I$, then we have the \emph{Jensen inequality}
$$
f(\Phi(X)) \le \Phi(f(X))
$$
for $X$ as above.

\section{Main result}

\begin{theorem}
\label{thm:main}
Let $f\colon (0, \infty) \to (0, \infty)$ be an operator monotone decreasing function, and $g\colon (0, \infty) \to \bR$ be an operator monotone function.
When $A$, $B$ are unital C$^*$-algebras and $\Phi \colon A \to B$ is a strictly positive linear map, the transform
$$
A^{++} \to B_{\sa},\quad X \mapsto g(\Phi(f(X)))
$$
is a convex map.
\end{theorem}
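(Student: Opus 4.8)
The plan is to reduce the statement to midpoint convexity and then invoke continuity: since $f$, $g$, and the bounded linear map $\Phi$ are all (norm-)continuous, the transform $X \mapsto g(\Phi(f(X)))$ is continuous on $A^{++}$, so it suffices to verify, for fixed $X, Y \in A^{++}$,
$$
g\bigl(\Phi(f(X \amn Y))\bigr) \le \frac{1}{2}\bigl(g(\Phi(f(X))) + g(\Phi(f(Y)))\bigr),
$$
and then pass from dyadic coefficients to all $t \in [0,1]$ by continuity. Note that strict positivity of $\Phi$ guarantees $\Phi(f(X)), \Phi(f(Y)) \in B^{++}$, so that the harmonic mean and the functional calculus by $g$ below are legitimate. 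The whole argument threads the harmonic mean $\hmn$ through three linked inequalities.

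First I would prove a \emph{strengthened} convexity of $f$, namely $f(X \amn Y) \le f(X) \hmn f(Y)$. The idea is to use the integral representation $f(x) = \beta + \int_{[0,\infty)} (x+\lambda)^{-1}\,d\mu(\lambda)$ of a positive operator monotone decreasing function, with $\beta \ge 0$ and a positive measure $\mu$. For each resolvent $h_\lambda(x) = (x+\lambda)^{-1}$ one has the \emph{exact} identity $h_\lambda(X \amn Y) = h_\lambda(X) \hmn h_\lambda(Y)$, both sides equalling $((X+\lambda)\amn(Y+\lambda))^{-1}$, while the constant satisfies $\beta = \beta \hmn \beta$; integrating in $\lambda$ and using the joint concavity of $\hmn$ yields the inequality. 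This sharpens Ando--Hiai's operator log convexity, since $\hmn \le \gmn$. Applying the positive map $\Phi$ and then the operator mean transformer inequality $\Phi(P \hmn Q) \le \Phi(P) \hmn \Phi(Q)$ (valid for any operator mean and positive linear map), I obtain $\Phi(f(X \amn Y)) \le \Phi(f(X)) \hmn \Phi(f(Y))$; operator monotonicity of $g$ then reduces the theorem to a single inequality about $g$ and the harmonic mean.

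The crucial and, I expect, hardest step is the lemma
$$
g(P \hmn Q) \le \frac{1}{2}\bigl(g(P) + g(Q)\bigr) \qquad (P, Q \in B^{++})
$$
for operator monotone $g$. Here I would again pass to an integral representation $g(x) = \alpha + \beta x + \int_{(0,\infty)} \frac{x}{x+\lambda}\,d\nu(\lambda)$ with $\beta \ge 0$ and $\nu \ge 0$, reducing to the building blocks $1$, $x$, and $g_\lambda(x) = \frac{x}{x+\lambda}$. The constant case is an equality and the identity case is just $P \hmn Q \le P \amn Q$; the essential case $g_\lambda(x) = 1 - \lambda(x+\lambda)^{-1}$ unwinds, upon taking inverses, to the superadditivity of the harmonic mean,
$$
(P + \lambda)\hmn(Q + \lambda) \ge (P \hmn Q) + \lambda,
$$
which holds because $\hmn$ is jointly concave and positively homogeneous (so $\lambda \hmn \lambda = \lambda$). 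Since the representing coefficients and measure are nonnegative, the inequality survives the integration. Combining the three inequalities gives the desired midpoint convexity, and continuity completes the proof.
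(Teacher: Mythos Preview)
Your architecture is exactly the paper's: reduce to midpoint convexity by continuity, then chain the three harmonic-mean inequalities
\[
f(X\amn Y)\le f(X)\hmn f(Y),\qquad
\Phi(P\hmn Q)\le\Phi(P)\hmn\Phi(Q),\qquad
g(P\hmn Q)\le g(P)\amn g(Q).
\]
The difference lies only in how you prove the first and third of these. You go through integral representations and the joint concavity/superadditivity of $\hmn$; the paper gets both as one-liners from the Löwner theory. For the $f$-lemma it simply observes that $1/f$ is operator monotone on $(0,\infty)$, hence operator concave, which is literally the statement $f(X\amn Y)\le f(X)\hmn f(Y)$. For the $g$-lemma it notes that $g(x^{-1})$ is operator monotone decreasing on $(0,\infty)$, hence operator convex, so $g((S\amn T)^{-1})\le g(S^{-1})\amn g(T^{-1})$; putting $S=P^{-1}$, $T=Q^{-1}$ gives $g(P\hmn Q)\le g(P)\amn g(Q)$ directly. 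These shortcuts avoid the integral decompositions altogether and in particular sidestep the small technical wrinkle in your representation of $g$: the form $g(x)=\alpha+\beta x+\int_{(0,\infty)}\frac{x}{x+\lambda}\,d\nu(\lambda)$ is not quite valid for general operator monotone $g$ on $(0,\infty)$ (e.g.\ $g=\log$ needs a subtracted normalizing term for integrability), though your per-$\lambda$ inequality is unaffected by constants, so the argument is easily repaired.
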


\begin{example}
\label{ex:main-thm-ex}
\begin{enumerate}[label=(\roman*)]
\item For $f(x) = x^{-1}$ and $g(x) = -x^{-1}$, the above reduces to the well-known concavity of the map $X \mapsto \Phi(X^{-1})^{-1}$.
\item Another important case is $g(x) = \log x$. For this $g(x)$, the cases of $\Phi(T) = T$ and $\Phi(T) = \omega(T)$ for some state $\omega$ were separately treated in~\cite{MR2805638}.
\end{enumerate}
\end{example}

We prove the above result through the following elementary lemmas.

\begin{lemma}
\label{lem:pos-map-harm-mean}
Let $\Phi \colon A \to B$ be a (strictly) positive linear map.
Then for (invertible) positive elements $X$ and $Y$ in $A$, we have
$$
\Phi(X \hmn Y) \le \Phi(X) \hmn \Phi(Y).
$$
\end{lemma}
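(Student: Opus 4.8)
The plan is to reduce to a unital map and then read off the inequality from the extremal (Schur-complement) description of the harmonic mean. For the reduction, set $E = \Phi(1)$; since $1 \in A^{++}$ and $\Phi$ is strictly positive we have $E \in B^{++}$, so $E^{1/2}$ is invertible and $\Psi(T) = E^{-1/2}\Phi(T)E^{-1/2}$ defines a unital positive map. Because the harmonic mean is invariant under congruence by the fixed invertible self-adjoint $E^{1/2}$, one has $\Phi(X)\hmn\Phi(Y) = E^{1/2}\bigl(\Psi(X)\hmn\Psi(Y)\bigr)E^{1/2}$, while $\Phi(X\hmn Y) = E^{1/2}\Psi(X\hmn Y)E^{1/2}$ by the definition of $\Psi$. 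Conjugating by $E^{-1/2}$ preserves the order, so the claim for $\Phi$ is equivalent to the same claim for $\Psi$, and I may assume $\Phi(1)=1$.

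Next I would invoke the variational formula
\[
X \hmn Y = \max\left\{ Z = Z^* : \begin{pmatrix} 2X & 0 \\ 0 & 2Y \end{pmatrix} \ge \begin{pmatrix} Z & Z \\ Z & Z \end{pmatrix} \right\},
\]
whose verification is elementary: for $Z$ with $2X - Z$ invertible the constraint is, via the Schur complement, equivalent to $(2Y - Z) - Z(2X - Z)^{-1}Z \ge 0$, and the largest admissible $Z$ is exactly $X \hmn Y$. Taking $Z = X\hmn Y$ therefore yields
\[
\begin{pmatrix} 2X - Z & -Z \\ -Z & 2Y - Z \end{pmatrix} \ge 0 \quad\text{in } M_2(A).
\]
If I can apply $\Phi$ entrywise and conclude that the analogous matrix over $B$ is positive, then the same characterization used for the pair $\Phi(X), \Phi(Y)$ forces $\Phi(Z) \le \Phi(X) \hmn \Phi(Y)$, which is precisely the asserted inequality (recall $\Phi(Z)=\Phi(X\hmn Y)$).

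The step I expect to be the main obstacle is exactly this transfer, namely that $\Phi \otimes \mathrm{id}_{M_2}$ sends the displayed positive block matrix to a positive one. If $\Phi$ were $2$-positive this would be automatic, but only positivity is assumed, and a positive map need not preserve positivity of arbitrary elements of $M_2(A)$. The feature to lean on is that the off-diagonal corner is the self-adjoint element $-Z$, so the matrix has self-adjoint entries; from it $\Phi$ does preserve all the $A$-valued combinations $s^2(2X - Z) + t^2(2Y - Z) - 2st\,Z \ge 0$ arising from real scalars $s,t$. Whether these preserved relations already force the image matrix in $M_2(B)$ to be positive—equivalently, whether for this extremal, self-adjoint-cornered matrix one can dispense with $2$-positivity—is the real content, and the point at which the positive-versus-completely-positive distinction must be confronted. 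I would expect the resolution to use the self-adjointness of the corner, together with the rank-deficiency coming from the vanishing Schur complement at $Z = X\hmn Y$, rather than any completeness of $\Phi$; the two reductions above are routine, and the weight of the proof rests on this last point.
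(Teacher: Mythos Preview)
Your outline has a genuine gap precisely where you flag it: you need a merely positive $\Phi$ to preserve positivity of the block matrix with self-adjoint corner $-Z$, and you do not prove this. The tentative mechanism you offer---the scalar inequalities $s^2(2X-Z)+t^2(2Y-Z)-2st\,Z\ge 0$ for real $s,t$---is insufficient: positivity of the image in $M_2(B)$ must be tested against all pairs $(\xi,\eta)$, not only those of the form $(\xi,t\xi)$. The rank deficiency you invoke is also a red herring; nothing in the argument below needs the Schur complement to vanish.

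The gap \emph{can} be closed, and self-adjointness of the corner is indeed the point, but the mechanism is Kadison's inequality rather than scalar tests. Writing the block condition (for invertible $A$) as $B\ge CA^{-1}C$ via the Schur complement, positivity of $\Phi$ gives $\Phi(B)\ge\Phi(CA^{-1}C)$, so one is left with $\Phi(CA^{-1}C)\ge\Phi(C)\Phi(A)^{-1}\Phi(C)$. Setting $\Psi(T)=\Phi(A)^{-\hlf}\Phi(A^{\hlf}TA^{\hlf})\Phi(A)^{-\hlf}$ (unital positive) and $D=A^{-\hlf}CA^{-\hlf}$ (self-adjoint because $C=C^*$), Kadison's inequality $\Psi(D)^2\le\Psi(D^2)$---valid for positive maps on self-adjoint elements, no $2$-positivity needed---gives exactly this after conjugating by $\Phi(A)^{\hlf}$.

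Notice, however, that this completion \emph{is} the paper's proof with the block-matrix scaffolding removed. The paper bypasses the variational formula entirely: from $X\hmn Y=2\bigl(X-X(X+Y)^{-1}X\bigr)$ it reduces the lemma directly to $\Phi(X(X+Y)^{-1}X)\ge\Phi(X)\Phi(X+Y)^{-1}\Phi(X)$, then applies the Jensen inequality $\Phi'_u(T^{-1})\ge\Phi'_u(T)^{-1}$ for the unital positive map $\Phi'_u(T)=\Phi(X)^{-\hlf}\Phi(X^{\hlf}TX^{\hlf})\Phi(X)^{-\hlf}$ at $T=X^{-\hlf}(X+Y)X^{-\hlf}$. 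Your reduction to $\Phi(1)=1$ and the extremal characterization are correct but turn out to be a detour: the substantive step in both routes is the same Jensen/Kadison inequality for an auxiliary unital positive map built from $\Phi$.
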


\begin{proof}
This is essentially~\cite{MR2284176}*{Theorem~4.1.5 (i)}, or can be reduced to Example~\ref{ex:main-thm-ex} (i), but for the reader's convenience let us give a more direct argument:
combining $X \hmn Y = 2(X - X (X + Y)^{-1} X)$ and the linearity of $\Phi$, we can reduce the claim to
\begin{equation}
\label{eq:pos-map-harm-mean-lem-goal}
\Phi(X (X+Y)^{-1} X) \ge \Phi(X) \Phi(X+Y)^{-1} \Phi(X).
\end{equation}

Consider the unital positive linear map
$$
\Phi'_u(T) = \Phi(X)^{-\hlf} \Phi(X^{\hlf} T X^{\hlf}) \Phi(X)^{-\hlf}.
$$
Then the Jensen inequality $\Phi'_u(T^{-1}) \ge \Phi'_u(T)^{-1}$ applied to $T = X^{-\hlf} (X + Y) X^{-\hlf}$ implies that
$$
\Phi(X)^{-\hlf} \Phi(X (X+Y)^{-1} X) \Phi(X)^{-\hlf} \ge \Phi(X)^{\hlf} \Phi(X+Y)^{-1} \Phi(X)^{\hlf},
$$
which is equivalent to \eqref{eq:pos-map-harm-mean-lem-goal}.
\end{proof}

\begin{remark}
\label{rem:conv-sym-mean}
From the above lemma one can derive $\Phi(X \smn Y) \le \Phi(X) \smn \Phi(Y)$ for any symmetric operator mean $\sigma$ in the sense of~\cite{MR563399}.
\end{remark}

\begin{lemma}
\label{lem:op-mon-dec-harm-arith-mean}
Let $f(x)$ be as in the statement of Theorem~\ref{thm:main}.
Then for elements $X$ and $Y$ in $A^{++}$, we have
$$
f(X \amn Y) \le f(X) \hmn f(Y).
$$
\end{lemma}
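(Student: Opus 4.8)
The plan is to reduce the statement to the joint concavity of the harmonic mean by means of the integral representation of $f$. Since $f$ is operator monotone decreasing and positive on $(0,\infty)$, it admits (up to the linear terms mentioned in the introduction) a representation of the form
$$
f(x) = \gamma + \int_{[0,\infty)} \frac{1}{\lambda + x}\, d\mu(\lambda),
$$
with $\gamma \ge 0$ and a positive measure $\mu$, where the atom at $\lambda = 0$ accounts for a possible $x^{-1}$ term. The virtue of this representation is that each integrand behaves optimally: writing $f_\lambda(x) = (\lambda + x)^{-1}$, one has $f_\lambda(X)^{-1} = \lambda + X$ and $f_\lambda(Y)^{-1} = \lambda + Y$, so a direct computation gives
$$
f_\lambda(X) \hmn f_\lambda(Y) = \left(\lambda + \frac{X+Y}{2}\right)^{-1} = f_\lambda(X \amn Y).
$$
Thus the desired inequality holds with \emph{equality} for every single atom, and likewise trivially for the constant, since $\gamma = \gamma \hmn \gamma$.

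Next I would invoke the joint concavity of the harmonic mean: the map $(S,T) \mapsto S \hmn T$ on $B^{++} \times B^{++}$ is jointly concave and positively homogeneous of degree one, hence superadditive, and this superadditivity passes to integrals against $\mu$ (the constant $\gamma$ being treated as one more term via $\gamma = \gamma \hmn \gamma$). Applying this with $S = f_\lambda(X)$, $T = f_\lambda(Y)$ and using the per-atom equalities above yields
$$
f(X \amn Y) = \gamma + \int f_\lambda(X \amn Y)\, d\mu(\lambda) \le \left(\gamma + \int f_\lambda(X)\, d\mu\right) \hmn \left(\gamma + \int f_\lambda(Y)\, d\mu\right) = f(X) \hmn f(Y),
$$
which is exactly the claim.

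I expect the main obstacle to lie in the integral form of the harmonic-mean inequality, that is, in justifying that the superadditivity of $\hmn$ — itself a consequence of the classical joint concavity of the parallel sum together with homogeneity — genuinely passes to the integral over $\mu$; the boundary contributions (the atom at $\lambda = 0$ and the constant $\gamma$) are the points that require a short limiting or continuity argument. A more conceptual alternative avoids the representation entirely: taking operator inverses (which reverse the order), the claim is equivalent to the midpoint operator concavity of $x \mapsto f(x)^{-1}$, and since the reciprocal of a positive operator monotone decreasing function on $(0,\infty)$ is operator monotone increasing, hence operator concave, the inequality follows at once. I would nonetheless present the integral argument as the main line, as it is the most self-contained given the tools already assembled in Lemma~\ref{lem:pos-map-harm-mean} and Remark~\ref{rem:conv-sym-mean}.
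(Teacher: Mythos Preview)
Your proposal is correct, but you have inverted the emphasis relative to the paper. What you call the ``more conceptual alternative'' at the end --- observe that $1/f(x)$ is operator monotone on $(0,\infty)$, hence operator concave, and that midpoint concavity of $1/f$ is literally the inequality $f(X)^{-1}\amn f(Y)^{-1}\le f(X\amn Y)^{-1}$, which inverts to the claim --- is precisely the paper's entire proof, attributed there to Ando and Hiai. The integral-representation argument you propose as the main line is a genuinely different route: it is more explicit and constructive, showing that the inequality holds with equality on each kernel $f_\lambda(x)=(\lambda+x)^{-1}$ and then appealing to the superadditivity of the harmonic mean to pass to the integral. That argument is sound, but it requires the extra ingredient of joint concavity of $\hmn$ and a limiting step to go from finite sums to the measure $\mu$, whereas the paper's approach needs only the standard implication ``operator monotone on $(0,\infty)$ $\Rightarrow$ operator concave'' already recorded in the preliminaries. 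Your justification for preferring the longer route --- that it is ``self-contained given the tools already assembled in Lemma~\ref{lem:pos-map-harm-mean} and Remark~\ref{rem:conv-sym-mean}'' --- is not quite right: neither of those results is actually used in your integral argument, and the short proof is the more self-contained one.
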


\begin{proof}
This is observed in~\cite{MR2805638}*{p.~614}: $1/f(x)$ is operator monotone on $(0, \infty)$, hence is operator concave. The latter condition is equivalent to the above inequality.
\end{proof}

\begin{proof}[Proof of Theorem \ref{thm:main}]
By continuity, it is enough to prove the inequality
\begin{equation}
\label{goal-ineq}
g(\Phi(f(X \amn Y))) \le g(\Phi(f(X))) \amn g(\Phi(f(Y))).
\end{equation}

By assumption, the map
$$
A^{++} \to B_{\sa}, \quad T \mapsto g(\Phi(T))
$$
is monotone.
Combined with Lemma \ref{lem:op-mon-dec-harm-arith-mean}, we obtain
\begin{equation}
\label{eq:1}
g(\Phi(f(X \amn Y))) \le g(\Phi(f(X) \hmn f(Y))).
\end{equation}
By Lemma \ref{lem:pos-map-harm-mean} and the operator monotonicity of $g(x)$, we have
\begin{equation}
\label{eq:2}
g(\Phi(f(X) \hmn f(Y))) \le g(\Phi(f(X)) \hmn \Phi(f(Y))).
\end{equation}
Moreover $g(x^{-1})$ is operator monotone decreasing on $(0, \infty)$, hence it is operator convex, so
$$
g\mathopen{}\left((S \amn T)^{-1} \right) \le g(S^{-1}) \amn g(T^{-1})
$$
holds for $S$, $T$ in $B^{++}$.
Putting $S = \Phi(f(X))^{-1}$ and $T = \Phi(f(Y))^{-1}$, we obtain
\begin{equation}
\label{eq:3}
g(\Phi(f(X)) \hmn \Phi(f(Y))) \le g(\Phi(f(X))) \amn g(\Phi(f(Y))).
\end{equation}

Collecting the inequalities \eqref{eq:1}, \eqref{eq:2}, and \eqref{eq:3}, we indeed obtain \eqref{goal-ineq}.
\end{proof}

\subsection{Relation with geometric mean}

Our key observation is that, maps of the form $X \mapsto \Phi(f(X))$ satisfy a strong convexity
$$
\Phi(f(X \amn Y)) \le \Phi(f(X)) \hmn \Phi(f(Y))
$$
which leads to usual convexity under functional calculus by $g$.
This notably differs from the approach of~\cite{MR2805638} in that we do not use the \emph{geometric mean}
$$
X \gmn Y = X^{\hlf} \left(X^{-\hlf} Y X^{-\hlf}\right)^{\hlf} X^{\hlf}.
$$
Since this can be characterized as
$$
X \gmn Y = \max \biggl\{ Z \biggm| Z = Z^*, \left(\begin{array}{cc}X & Z \\Z & Y\end{array}\right) \ge 0 \biggr\}
$$
and positive maps satisfy a restricted form of $2$-positivity for block matrices of the above form, we do have $\Phi(X \gmn Y) \le \Phi(X) \gmn \Phi(Y)$ when $\Phi$ is a positive map, see also Remark~\ref{rem:conv-sym-mean}.

Combining this with the operator log-convexity $f(X \amn Y) \le f(X) \gmn f(Y)$, we obtain $\Phi(f(X \amn Y)) \le \Phi(f(X)) \gmn \Phi(f(Y))$.
(This can be also seen from the above observation as we always have $A \hmn B \le A \gmn B$ for positive $A$ and $B$.)

However, taking functional calculus by operator monotone functions is not compatible with taking harmonic mean in general.
For example,
\begin{align*}
S &= \left(\begin{array}{cc}1.1 & 0 \\0 & 0.1\end{array}\right),&
T &= \left(\begin{array}{cc}7.17 & -4.41 \\-4.41 & 3.13\end{array}\right)
\end{align*}
have the geometric mean
$$
S \gmn T = \left(\begin{array}{cc}1.85834\ldots & -0.63486\ldots \\-0.63486\ldots & 0.52569\ldots\end{array}\right).
$$
Then, for $g(x) = \sqrt x$, the matrix $\frac12(g(S) + g(T)) - g(S \gmn T)$ has eigenvalues
$$
\lambda_1 = 0.5786\ldots, \quad
\lambda_2 = -0.0159\ldots,
$$
so it seems difficult to derive the convexity of $X \mapsto g(\Phi(f(X)))$ using the geometric mean.

\section{Two variable version}

\label{sec:two-var-ver}

In~\citelist{\cite{MR3067823}\cite{MR3464069}} (see also \cite{MR3429039}), Hiai considered $2$-variate convexity / concavity problems involving positive maps.
Among his results is the following part of~\cite{MR3464069}*{Theorem 2.1}, which is closest to our setting: with $-1 \le p, q \le 0$, let $g(x)$ be a real function such that $g(x^{p+q})$ is operator monotone decreasing.
Then, for any strictly positive maps $\Phi\colon M_m(\bC) \to M_k(\bC)$ and $\Psi\colon M_n(\bC) \to M_k(\bC)$, the map
\[
%\begin{gather*}
M_m(\bC)^{++} \times M_n(\bC)^{++} \to \bR, \quad (X, Y) \mapsto \Tr\mathopen{}\left(g\mathopen{}\left(\Phi(X^p)^{\hlf}\Psi(Y^q)\Phi(X^p)^{\hlf}\right)\right)
%\end{gather*}
\]
is jointly convex.

Our result implies that, if $f_1(x)$ and $f_2(x)$ are operator monotone decreasing positive functions on $(0, \infty)$, and if $g(x)$ is as in Theorem \ref{thm:main}, then the map
\begin{equation}
\begin{gathered}
\label{eq:2-var-func}
M_m(\bC)^{++} \times M_n(\bC)^{++} \to \bR,\\
(X, Y) \mapsto \Tr\mathopen{}\left(g\mathopen{}\left(\Phi(f_1(X))^{\hlf}\Psi(f_2(Y))\Phi(f_1(X))^{\hlf}\right)\right)
\end{gathered}
\end{equation}
is \emph{separately} convex.
Indeed, if $X$ is fixed, the map
$$
M_n(\bC) \to M_k(\bC), \quad T \mapsto \Phi(f_1(X))^{\hlf}\Psi(T)\Phi(f_1(X))^{\hlf}
$$
is positive, hence we can apply Theorem~\ref{thm:main}.
Using the trace property, one can check the equality
\[
%\begin{multline*}
\Tr\mathopen{}\left(h\mathopen{}\left(\Phi(f_1(X))^{\hlf}\Psi(f_2(Y))\Phi(f_1(X))^{\hlf}\right)\right) = \Tr\mathopen{}\left(h\mathopen{}\left(\Psi(f_2(Y))^{\hlf}\Phi(f_1(X))\Psi(f_2(Y))^{\hlf}\right)\right)
%\end{multline*}
\]
when $h(x)$ is a polynomial function.
By uniform approximation on intervals we can replace $h$ by $g$, which allows us to switch the role of $X$ and $Y$.

While our method does not seem to have direct implication for joint convexity, for $g(x) = x$ the following variation of the argument of~\cite{MR0332080} shows that the map \eqref{eq:2-var-func} is indeed jointly convex.

\begin{proposition}
\label{prop:Lieb-conv}
Let $f_1(x)$ and $f_2(x)$ be operator monotone decreasing functions from $(0, \infty)$ to itself.
If $A$ and $B$ are unital C$^*$-algebras, $\tau$ is a tracial positive functional on $B$, $\Phi$ is a positive linear map $A \to B$, and $K \in B$, then the map
$$
A^{++} \to \bR, \quad X \mapsto \tau\mathopen{}\left(\Phi(f_1(X)) K^* \Phi(f_2(X)) K\right)
$$
is convex.
\end{proposition}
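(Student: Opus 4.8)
The plan is to reduce, by the same strong-convexity mechanism behind Theorem~\ref{thm:main}, to a genuinely two-variable joint convexity, which I then settle by a $2\times 2$ block trick in the spirit of~\cite{MR0332080}. First I would record two properties of the bilinear form $\Theta(P,Q) = \tau(P K^* Q K)$ on positive elements: for $P,Q \ge 0$ one has $\tau(P K^* Q K) = \tau\bigl((Q^{\hlf} K P^{\hlf})(Q^{\hlf} K P^{\hlf})^*\bigr) \ge 0$ since $\tau$ is tracial and positive, and applying the same identity to an increment $P'-P \ge 0$ shows that $\Theta$ is nondecreasing in each slot. By the perturbation $\Phi_\varepsilon(\cdot) = \Phi(\cdot) + \varepsilon\rho(\cdot)1_B$ with $\rho$ a faithful state on $A$, I may assume $\Phi$ strictly positive, so that all elements $\Phi(f_i(X))$ lie in $B^{++}$ and the harmonic means below are defined; the general case follows on letting $\varepsilon \to 0$, as convexity passes to pointwise limits.

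By continuity it suffices to prove midpoint convexity. Writing $P = \Phi(f_1(X))$, $Q = \Phi(f_1(Y))$, $R = \Phi(f_2(X))$, $S = \Phi(f_2(Y))$, Lemma~\ref{lem:op-mon-dec-harm-arith-mean}, the order-preservation of $\Phi$, and Lemma~\ref{lem:pos-map-harm-mean} together yield the strong convexity $\Phi(f_1(X \amn Y)) \le P \hmn Q$ and $\Phi(f_2(X \amn Y)) \le R \hmn S$. Feeding this into the monotone form $\Theta$ reduces the midpoint inequality to
\begin{equation*}
\tau\bigl((P \hmn Q) K^* (R \hmn S) K\bigr) \le \tfrac12\bigl(\tau(P K^* R K) + \tau(Q K^* S K)\bigr).
\end{equation*}
Substituting $P = u_1^{-1}$, $Q = u_2^{-1}$, $R = w_1^{-1}$, $S = w_2^{-1}$ and using $u^{-1} \hmn v^{-1} = ((u+v)/2)^{-1}$, this is exactly the assertion that
\begin{equation*}
F(u,w) := \tau(u^{-1} K^* w^{-1} K)
\end{equation*}
is jointly convex on $B^{++} \times B^{++}$.

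The main obstacle is precisely this joint convexity: the two simultaneous inverses block the usual route, for although $F(u,w) = \tau\bigl((K u^{-\hlf})^* w^{-1} (K u^{-\hlf})\bigr)$ is jointly convex in the pair $(w, K u^{-\hlf})$ by the operator convexity of $(A,Y)\mapsto Y^* A^{-1} Y$, the dependence $u \mapsto K u^{-\hlf}$ is not affine. I would instead collapse the two variables into one by amplification: on $M_2(B)$ with the tracial positive functional $\boldsymbol\tau = \tau \otimes \Tr_2$, set $\mathbf A = \left(\begin{smallmatrix} u & 0 \\ 0 & w \end{smallmatrix}\right)$ and the self-adjoint $\mathbf K = \left(\begin{smallmatrix} 0 & K^* \\ K & 0 \end{smallmatrix}\right)$. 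Computing $(\mathbf A^{-1}\mathbf K)^2$ and using the trace property gives
\begin{equation*}
F(u,w) = \tfrac12\,\boldsymbol\tau\bigl((\mathbf K \mathbf A^{-1})^2\bigr).
\end{equation*}
Since $(u,w) \mapsto \mathbf A$ is affine, it now suffices to prove that the \emph{single}-variable map $\mathbf A \mapsto \boldsymbol\tau\bigl((\mathbf K \mathbf A^{-1})^2\bigr)$ is convex on $M_2(B)^{++}$.

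For this last step I would differentiate twice along $\mathbf A + tH$ with $H$ self-adjoint. Using $\tfrac{d}{dt}\mathbf A^{-1} = -\mathbf A^{-1} H \mathbf A^{-1}$ and cyclicity of $\boldsymbol\tau$, the second derivative at $t=0$ works out, after setting $\tilde K = \mathbf A^{-\hlf}\mathbf K \mathbf A^{-\hlf}$ and $\tilde H = \mathbf A^{-\hlf} H \mathbf A^{-\hlf}$, to a positive multiple of $2\,\boldsymbol\tau(\tilde K^2 \tilde H^2) + \boldsymbol\tau(\tilde K \tilde H \tilde K \tilde H)$. This is nonnegative: expanding the self-adjoint square gives $\boldsymbol\tau\bigl((\tilde K \tilde H + \tilde H \tilde K)^2\bigr) = 2\,\boldsymbol\tau(\tilde K^2 \tilde H^2) + 2\,\boldsymbol\tau(\tilde K \tilde H \tilde K \tilde H) \ge 0$, whence $\boldsymbol\tau(\tilde K \tilde H \tilde K \tilde H) \ge -\boldsymbol\tau(\tilde K^2 \tilde H^2)$ and the second derivative is at least a positive multiple of $\boldsymbol\tau(\tilde K^2 \tilde H^2) = \boldsymbol\tau\bigl((\tilde K \tilde H)^*(\tilde K \tilde H)\bigr) \ge 0$. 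This gives the required convexity and closes the chain. The only delicate points are bookkeeping ones—justifying the $\varepsilon \to 0$ limit and the differentiation under $\tau$—both routine given continuity of $\tau$ and smoothness of inversion.
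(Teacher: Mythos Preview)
Your argument is correct and takes a genuinely different route from the paper's. The paper first invokes the integral representation of operator monotone decreasing functions to reduce to $f_1(x)=(\lambda+x)^{-1}$, $f_2(x)=(\mu+x)^{-1}$, then differentiates $t\mapsto\tau(\Phi(f_1(X+tY))K^*\Phi(f_2(X+tY))K)$ directly through the resolvents, and bounds the resulting second derivative by applying the Kadison--Jensen inequality $\Phi'_u(b^2)\ge\Phi'_u(b)^2$ for two auxiliary unital positive maps; the final step is an algebraic regrouping into a sum of $\tau$ of positive elements. You instead recycle Lemmas~\ref{lem:pos-map-harm-mean} and~\ref{lem:op-mon-dec-harm-arith-mean} to push all the $f_i$-dependence into the single inequality $\Phi(f_i(X\amn Y))\le\Phi(f_i(X))\hmn\Phi(f_i(Y))$, use monotonicity of the bilinear form $\Theta$ to strip $f_1,f_2,\Phi$ entirely, and isolate the analytic core as the joint convexity of $(u,w)\mapsto\tau(u^{-1}K^*w^{-1}K)$; the $2\times2$ amplification collapsing this to convexity of $\mathbf A\mapsto\boldsymbol\tau((\mathbf K\mathbf A^{-1})^2)$ is neat and, amusingly, is the reverse of the block trick the paper uses immediately afterwards to derive the two-variable Theorem from this Proposition. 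Your approach avoids both the integral representation and the Kadison inequality, and makes the link with Theorem~\ref{thm:main} more transparent; the paper's approach, on the other hand, stays closer to Lieb's original computation and does not need $\Phi(f_i(\cdot))$ to be invertible until the very end. One small caveat: the perturbation $\Phi_\varepsilon=\Phi+\varepsilon\rho(\cdot)1_B$ with $\rho$ faithful presumes such a state exists, which can fail for general $A$; it is cleaner to note that you only need $\Phi(f_i(X)),\Phi(f_i(Y))\in B^{++}$, which follows once $\Phi(1_A)$ is invertible, and the latter can be arranged by adding $\varepsilon\,\omega(\cdot)1_B$ for any state $\omega$ (the paper makes the same handwave).
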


\begin{proof}
By linearity and the integral representation of the $f_i(x)$, we may assume $f_1(x) = 1/(\lambda + x)$ and $f_2(x) = 1/(\mu + x)$ for some $\lambda, \mu \ge 0$.
By small perturbation we may also assume that $K$ is invertible and $\Phi$ is strictly positive.
Let $X \in A^{++}$ and $Y = Y^* \in A$.
The claim follows if we prove that the function
$$
h(t) = \tau\mathopen{}\left(\Phi(f_1(X + t Y)) K^* \Phi(f_2(X + t Y)) K\right)
$$
defined for small $\absv{t}$ satisfies $\partial_t^2 h(t) \vert_{t=0} \ge 0$.

The derivative of $f_1(X + t Y)$ is given by
$$
\partial_t f_1(X + tY) = - (\lambda  + X + t Y)^{-1} Y (\lambda  + X + t Y)^{-1} = - f_1(X + t Y) Y f_1(X + t Y).
$$
We thus have
$$
\partial_t^2 f_i(X + tY) \vert_{t=0} = 2 f_i(X) Y f_i(X) Y f_i(X),
$$
for $i = 1, 2$, and
\begin{multline*}
(\partial_t^2 h)(0) = 2 \bigl(\tau(\Phi(f_1(X) Y f_1(X) Y f_1(X)) K^* \Phi(f_2(X)) K) \\
+ \tau(\Phi(f_1(X) Y f_1(X)) K^* \Phi(f_2(X) Y f_2(X)) K) + \tau(\Phi(f_1(X)) K^* \Phi(f_2(X) Y f_2(X) Y f_2(X)) K) \bigr).
\end{multline*}

Let us put
\begin{align*}
b_1 &= f_1(X)^{\hlf} Y f_1(X)^{\hlf},& b_2 &= f_2(X)^{\hlf} Y f_2(X)^{\hlf},\\ c_1 &= \Phi(f_1(X))^{\hlf}, & c_2 &= (K^* \Phi(f_2(X)) K)^{\hlf}
\end{align*}
and consider the maps
\begin{align*}
\Phi'(T) &= \Phi\mathopen{}\left(f_1(X)^{\hlf} T f_1(X)^{\hlf}\right),& \Phi'_u(T) &= c_1^{-1} \Phi'(T) c_1^{-1}\\
\Psi'(T) &= K^* \Phi\mathopen{}\left(f_2(X)^{\hlf} T f_2(X)^{\hlf}\right) K,& \Psi'_u(T) &= c_2^{-1} \Psi'(T) c_2^{-1}.
\end{align*}
(The elements $c_i$ are invertible by our additional assumptions on $K$ and $\Phi$.)
Thus, we want to prove
$$
2 \tau\mathopen{}\left(c_1 \Phi'_u(b_1^2) c_1 c_2^2 + c_1 \Phi'_u(b_1) c_1 c_2 \Psi'_u(b_2) c_2 + c_1^2 c_2 \Psi'_u(b_2^2) c_2 \right) \ge 0.
$$

Since $\Phi'_u$ and $\Psi'_u$ are unital positive maps, the tracial property of $\tau$ together with the (Jensen--)Kadison inequalities of the form $\Phi'_u(b_1^2) \ge \Phi'_u(b_1)^2$ imply
\begin{align*}
\tau\mathopen{}\left(c_1 \Phi'_u(b_1^2) c_1 c_2^2 \right) &\ge \tau(c_1 d_1^2 c_1 c_2^2),&
\tau\mathopen{}\left(c_1^2 c_2 \Psi'_u(b_2^2) c_2 \right) &\ge \tau(c_1^2 c_2 d_2^2 c_2)
\end{align*}
for $d_1 = \Phi'_u(b_1)$ and $d_2 = \Psi'_u(b_2)$.
Thus, it is enough to have
$$
2 \tau \mathopen{}\left(c_1 d_1^2 c_1 c_2^2 + c_1 d_1 c_1 c_2 d_2 c_2 + c_1^2 c_2 d_2^2 c_2 \right) \ge 0.
$$
Using the tracial property of $\tau$, one sees that the left hand is equal to
$$
\tau\mathopen{}\left((c_2 c_1 d_1 + d_2 c_2 c_1) (d_1 c_1 c_2 + c_1 c_2 d_2) + c_2 c_1 d_1^2 c_1 c_2 + c_1 c_2 d_2^2 c_2 c_1 \right),
$$
which is indeed nonnegative.
\end{proof}

\begin{theorem}
Let $f_1(x)$ and $f_2(x)$ be as in Proposition~\ref{prop:Lieb-conv}, and let $A_1$, $A_2$, $B$ be unital C$^*$-algebras, $\tau$ be a tracial positive functional on $B$, and $\Phi\colon A_1 \to B$ and $\Psi\colon A_2 \to B$ be positive linear maps.
Then the map
$$
A_1^{++} \times A_2^{++} \to \bR, \quad (X, Y) \mapsto \tau\mathopen{}\left(\Phi(f_1(X))^{\hlf}\Psi(f_2(Y))\Phi(f_1(X))^{\hlf}\right)
$$
is jointly convex.
\end{theorem}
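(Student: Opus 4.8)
The plan is to deduce the joint convexity from the single-variable Proposition~\ref{prop:Lieb-conv} by an amplification (matrix-dilation) argument, so that essentially no new analytic input is required. First I would use the trace property of $\tau$ to rewrite the target functional in product form: since $\Phi(f_1(X)) \ge 0$ has a positive square root in $B$, we have
\[
\tau\left(\Phi(f_1(X))^{\hlf}\Psi(f_2(Y))\Phi(f_1(X))^{\hlf}\right) = \tau\left(\Phi(f_1(X))\Psi(f_2(Y))\right),
\]
so it suffices to prove that $(X,Y) \mapsto \tau(\Phi(f_1(X))\Psi(f_2(Y)))$ is jointly convex on $A_1^{++}\times A_2^{++}$.

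The key step is to encode the two variables and the two maps into a single instance of Proposition~\ref{prop:Lieb-conv}. I would set $\mathcal{A} = A_1 \oplus A_2$, which is a unital C$^*$-algebra, and $\mathcal{B} = M_2(B)$, equipped with the functional $\tau_2\bigl((b_{ij})\bigr) = \tau(b_{11}) + \tau(b_{22})$; one checks directly, using the traciality of $\tau$, that $\tau_2$ is again tracial and positive. Define the positive linear map $\tilde\Phi \colon \mathcal{A} \to \mathcal{B}$ by $\tilde\Phi(a_1 \oplus a_2) = \left(\begin{array}{cc}\Phi(a_1) & 0 \\ 0 & \Psi(a_2)\end{array}\right)$, and put $K = \left(\begin{array}{cc}0 & 0 \\ 1 & 0\end{array}\right) \in \mathcal{B}$. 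For $Z = X \oplus Y \in \mathcal{A}^{++}$ one has $f_i(Z) = f_i(X)\oplus f_i(Y)$, and a short computation gives
\[
\tilde\Phi(f_1(Z))\, K^* \tilde\Phi(f_2(Z))\, K = \left(\begin{array}{cc}\Phi(f_1(X))\Psi(f_2(Y)) & 0 \\ 0 & 0\end{array}\right),
\]
so that $\tau_2\bigl(\tilde\Phi(f_1(Z))K^*\tilde\Phi(f_2(Z))K\bigr) = \tau(\Phi(f_1(X))\Psi(f_2(Y)))$. The off-diagonal choice of $K$ is precisely what selects the cross term $\Phi(f_1(X))\Psi(f_2(Y))$ while discarding the unwanted $\Phi(f_1(Y))$ and $\Psi(f_2(X))$ that the block map would otherwise produce on the diagonal.

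Applying Proposition~\ref{prop:Lieb-conv} with the data $(\mathcal{A}, \mathcal{B}, \tau_2, \tilde\Phi, K)$ — whose hypotheses are all met, as $f_1, f_2$ are the same as there — yields that $Z \mapsto \tau_2(\tilde\Phi(f_1(Z))K^*\tilde\Phi(f_2(Z))K)$ is convex on $\mathcal{A}^{++}$. Finally I would translate this back: because $\mathcal{A}^{++} = \{X \oplus Y : X \in A_1^{++},\, Y \in A_2^{++}\}$ and a segment $(1-s)(X_0 \oplus Y_0) + s(X_1 \oplus Y_1)$ in $\mathcal{A}^{++}$ corresponds exactly to the segment $(1-s)(X_0,Y_0) + s(X_1,Y_1)$ in $A_1^{++}\times A_2^{++}$, convexity in the single variable $Z$ is literally the joint convexity of $(X,Y)\mapsto \tau(\Phi(f_1(X))\Psi(f_2(Y)))$, which is the desired conclusion. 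I do not expect a genuine obstacle: all the analytic difficulty is already absorbed into Proposition~\ref{prop:Lieb-conv}, and the only point requiring care is the bookkeeping of the dilation — in particular verifying that $\tilde\Phi$ is positive, that $\tau_2$ is tracial, and that the off-diagonal $K$ isolates the correct term.
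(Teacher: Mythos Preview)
Your proposal is correct and follows essentially the same route as the paper's proof: both amplify to $\mathcal{A}=A_1\oplus A_2$ and $M_2(B)$ with the diagonal positive map $\tilde\Phi$ and the off-diagonal $K=\left(\begin{smallmatrix}0&0\\1&0\end{smallmatrix}\right)$, then invoke Proposition~\ref{prop:Lieb-conv}. Your $\tau_2$ is exactly the paper's $\tau\otimes\Tr$, and the remaining differences are purely expository.
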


\begin{proof}
Consider the map
$$
\tilde\Phi\colon A_1 \oplus A_2 \to M_2(B), \quad X \oplus Y \mapsto \left(\begin{array}{cc}\Phi(X) & 0 \\0 & \Psi(Y)\end{array}\right).
$$
This is a positive map, and for $(X, Y) \in A_1^{++} \times A_2^{++}$ the elements
\begin{align*}
Z &= X  \oplus Y \in (A_1 \oplus A_2)^{++},&
K &= \left(\begin{array}{cc}0 & 0 \\1 & 0\end{array}\right) \in M_{2}(B)
\end{align*}
satisfy
\[
%\begin{multline*}
(\tau\otimes\Tr)\mathopen{}\left(\tilde\Phi(f_1(Z)) K^* \tilde\Phi(f_2(Z)) K\right) = \tau\mathopen{}\left(\Phi(f_1(X))\Psi(f_2(Y))\right) = \tau\mathopen{}\left(\Phi(f_1(X))^{\hlf}\Psi(f_2(Y))\Phi(f_1(X))^{\hlf}\right)
%\end{multline*}
\]
up to the identification $M_2(B) \simeq B \otimes M_2(\bC)$.
Thus the assertion follows from Proposition \ref{prop:Lieb-conv}.
\end{proof}

\begin{bibdiv}
\begin{biblist}
\bib{ando-lect-note-op-ineq}{misc}{
      author={Ando, T.},
       title={Topics on operator inequalities},
         how={lecture note},
        date={1978},
        note={Hokkaido University},
}

\bib{MR535686}{article}{
      author={Ando, T.},
       title={Concavity of certain maps on positive definite matrices and
  applications to {H}adamard products},
        date={1979},
        ISSN={0024-3795},
     journal={Linear Algebra Appl.},
      volume={26},
       pages={203\ndash 241},
         url={https://doi.org/10.1016/0024-3795(79)90179-4},
         doi={10.1016/0024-3795(79)90179-4},
      review={\MR{535686}},
}

\bib{MR2805638}{article}{
      author={Ando, Tsuyoshi},
      author={Hiai, Fumio},
       title={Operator log-convex functions and operator means},
        date={2011},
        ISSN={0025-5831},
     journal={Math. Ann.},
      volume={350},
      number={3},
       pages={611\ndash 630},
         url={http://dx.doi.org/10.1007/s00208-010-0577-4},
         doi={10.1007/s00208-010-0577-4},
      review={\MR{2805638}},
}

\bib{MR1845701}{article}{
      author={Aujla, Jaspal~Singh},
      author={Singh~Rawla, Mandeep},
      author={Vasudeva, H.~L.},
       title={Log-convex matrix functions},
        date={2000},
        ISSN={0353-8893},
     journal={Univ. Beograd. Publ. Elektrotehn. Fak. Ser. Mat.},
      volume={11},
       pages={19\ndash 32 (2001)},
      review={\MR{1845701}},
}

\bib{MR1477662}{book}{
      author={Bhatia, Rajendra},
       title={Matrix analysis},
      series={Graduate Texts in Mathematics},
   publisher={Springer-Verlag, New York},
        date={1997},
      volume={169},
        ISBN={0-387-94846-5},
         url={http://dx.doi.org/10.1007/978-1-4612-0653-8},
         doi={10.1007/978-1-4612-0653-8},
      review={\MR{1477662}},
}

\bib{MR2284176}{book}{
      author={Bhatia, Rajendra},
       title={Positive definite matrices},
      series={Princeton Series in Applied Mathematics},
   publisher={Princeton University Press, Princeton, NJ},
        date={2007},
        ISBN={978-0-691-12918-1; 0-691-12918-5},
      review={\MR{2284176}},
}

\bib{MR3429039}{article}{
      author={Carlen, Eric~A.},
      author={Frank, Rupert~L.},
      author={Lieb, Elliott~H.},
       title={Some operator and trace function convexity theorems},
        date={2016},
        ISSN={0024-3795},
     journal={Linear Algebra Appl.},
      volume={490},
       pages={174\ndash 185},
         url={https://doi.org/10.1016/j.laa.2015.11.006},
      review={\MR{3429039}},
}

\bib{MR0355615}{article}{
      author={Choi, Man~Duen},
       title={A {S}chwarz inequality for positive linear maps on {$C^{\ast}$}-algebras},
        date={1974},
        ISSN={0019-2082},
     journal={Illinois J. Math.},
      volume={18},
       pages={565\ndash 574},
         url={http://projecteuclid.org/euclid.ijm/1256051007},
      review={\MR{0355615}},
}

\bib{MR649196}{article}{
      author={Hansen, Frank},
      author={Pedersen, Gert~Kjaerg\.{a}rd},
       title={Jensen's inequality for operators and {L}\"{o}wner's theorem},
        date={1981/82},
        ISSN={0025-5831},
     journal={Math. Ann.},
      volume={258},
      number={3},
       pages={229\ndash 241},
         url={https://doi.org/10.1007/BF01450679},
         doi={10.1007/BF01450679},
      review={\MR{649196}},
}

\bib{MR3067823}{article}{
      author={Hiai, Fumio},
       title={Concavity of certain matrix trace and norm functions},
        date={2013},
        ISSN={0024-3795},
     journal={Linear Algebra Appl.},
      volume={439},
      number={5},
       pages={1568\ndash 1589},
         url={https://doi.org/10.1016/j.laa.2013.04.020},
         doi={10.1016/j.laa.2013.04.020},
      review={\MR{3067823}},
}

\bib{MR3464069}{article}{
      author={Hiai, Fumio},
       title={Concavity of certain matrix trace and norm functions. {II}},
        date={2016},
        ISSN={0024-3795},
     journal={Linear Algebra Appl.},
      volume={496},
       pages={193\ndash 220},
         url={https://doi.org/10.1016/j.laa.2015.12.032},
         doi={10.1016/j.laa.2015.12.032},
      review={\MR{3464069}},
}

\bib{MR3434508}{article}{
      author={Kian, Mohsen},
      author={Dragomir, S.~S.},
       title={{$f$}-divergence functional of operator log-convex functions},
        date={2016},
        ISSN={0308-1087},
     journal={Linear Multilinear Algebra},
      volume={64},
      number={2},
       pages={123\ndash 135},
         url={https://doi.org/10.1080/03081087.2015.1025686},
      review={\MR{3434508}},
}

\bib{MR563399}{article}{
      author={Kubo, Fumio},
      author={Ando, Tsuyoshi},
       title={Means of positive linear operators},
        date={1979/80},
        ISSN={0025-5831},
     journal={Math. Ann.},
      volume={246},
      number={3},
       pages={205\ndash 224},
         url={https://doi.org/10.1007/BF01371042},
      review={\MR{563399}},
}

\bib{MR0332080}{article}{
      author={Lieb, Elliott~H.},
       title={Convex trace functions and the {W}igner-{Y}anase-{D}yson
  conjecture},
        date={1973},
        ISSN={0001-8708},
     journal={Advances in Math.},
      volume={11},
       pages={267\ndash 288},
         url={https://doi.org/10.1016/0001-8708(73)90011-X},
      review={\MR{0332080}},
}

\end{biblist}
\end{bibdiv}
%\printbibliography
\end{document}